\newif\ifTwoColumn
\DeclareMathOperator{\tvd}{tvd}
\DeclareMathOperator{\sign}{sign}
\DeclarePairedDelimiter{\norm}{\lVert}{\rVert}		% requires: mathtools package
\newcommand{\inv}{^{-1}}
\newcommand{\lam}{{\lambda} }
\newcommand{\iter}[1]{ ^{(#1)} }
\newcommand{\tr}{^{\mathsf{T}}}			% transpose
\newcommand{\RR}{\mathbb{R}}
\renewcommand{\le}{\leqslant}
\renewcommand{\ge}{\geqslant}
\newcommand{\thalf}{\tfrac{1}{2}}
\newcommand{\prox}{\operatorname{prox}}
\newcommand{\opt}{^{\mathsf{opt}}}			% optimal
\DeclareMathOperator{\mtvd}{mtvd}
\newtheorem{defn}{Definition}
\newtheorem{prop}{Proposition}
\newtheorem{theorem}{Theorem}
\title{Total Variation Denoising via the Moreau Envelope}
\author{Ivan Selesnick%
%\thanks{Ivan Selesnick is with the Department of Electrical and Computer Engineering, Tandon School of Engineering, New York University, NY, USA.}
\thanks{The author is with the Tandon School of Engineering, New York University, New York, USA. Email: {selesi@nyu.edu}.}
\thanks{This work was supported by NSF under grant 1525398 and ONR under grant N00014-15-1-2314.}%
}
\date{Last edit: \currenttime, \today}
\begin{document}
\maketitle

\begin{abstract}

Total variation denoising is a nonlinear filtering method well suited for 
the estimation of piecewise-constant signals observed in additive white Gaussian noise. 
The method is defined by the minimization of a particular non-differentiable convex cost function.
This paper describes a generalization of this cost function
that can yield more accurate estimation of piecewise constant signals.
The new cost function involves a non-convex penalty (regularizer) 
designed to maintain the convexity of the cost function.
The new penalty is based on the Moreau envelope.
The proposed total variation denoising method
can be implemented using forward-backward splitting.

\end{abstract}

\section{Introduction}

Total variation (TV) denoising is a nonlinear filtering method based on
the assumption that the underlying signal is piecewise constant
(equivalently, the derivative of the underlying signal is \emph{sparse}) \cite{ROF_1992}.
Such signals arise in geoscience, biophysics, and other areas \cite{Little_2011_RSoc_Part1}.
The TV denoising technique is also used in conjunction with
other methods in order to process more general types of signals \cite{Gholami_2013_SP, Easley_2009_shearlet_tv, DurandFroment_2003_SIAM, Ding_2015_SPL}. 

Total variation denoising is prototypical of methods based on sparse signal models.
It is defined by the minimization of a convex cost function 
comprising a quadratic data fidelity term and a non-differentiable convex penalty term.
The penalty term 
is the composition of a linear operator and the $ \ell_1 $ norm.
Although the $ \ell_1 $ norm stands out as the convex penalty that most 
effectively induces sparsity \cite{Hastie_2015_CRC_book},
non-convex penalties 
can lead to more accurate estimation of the underlying signal \cite{Nikolova_2000_SIAM, Nikolova_2005_MMS, Nikolova_2010_TIP, Rodriguez_2009_TIP, Storath_2014_TSP}.

%Recent papers consider the prescription of non-convex penalties 
%that maintain the convexity of the TV denoising cost function \cite{Lanza_2016_JMIV, MalekMohammadi_2016_TSP, Selesnick_SPL_2015, Astrom_2015_cnf}.
%The motivation for this is to leverage
%the benefits of both non-convex penalization and convex optimization;
%that is, to improve the estimation of jump discontinuities while guaranteeing global convergence.
%We note that the penalties considered in these works are separable (additive).
%But when penalties are constrained to ensure the convexity of the cost function,
%non-separable penalties can be beneficial relative to separable penalties.
%This is because cost function convexity is a severely limiting constraint. 
%Non-separable penalties can overcome this limitation 
%because they are more general \cite{Selesnick_2016_TSP_BISR}.

A few recent papers consider the prescription of non-convex penalties 
that maintain the convexity of the TV denoising cost function \cite{Lanza_2016_JMIV, MalekMohammadi_2016_TSP, Selesnick_SPL_2015, Astrom_2015_cnf}.
(The motivation for this is to leverage the benefits of both non-convex penalization and convex optimization,
e.g., to accurately estimate the amplitude of jump discontinuities while guaranteeing the uniqueness of the solution.)
The penalties considered in these works are separable (additive).
But non-separable penalties can outperform separable penalties in this context.
This is because preserving the convexity of the cost function is a severely limiting requirement. 
Non-separable penalties can more successfully meet this requirement 
because they are more general than separable penalties \cite{Selesnick_2016_TSP_BISR}.

This paper proposes a non-separable non-convex penalty for total variation denoising that generalizes the standard penalty
and maintains the convexity of the cost function to be minimized.%
\footnote{Software is available at {http://eeweb.poly.edu/iselesni/mtvd}} 
The new penalty, which is based on the Moreau envelope,
can more accurately estimate the amplitudes of jump discontinuities
in an underlying piecewise constant signal. 

\subsection{Relation to Prior Work} 

Numerous non-convex penalties and algorithms have been proposed
to outperform $ \ell_1 $-norm regularization for the estimation
of sparse signals
e.g., \cite{Castella_2015_camsap_noabr, Candes_2008_JFAP, Nikolova_2011_chap, Mohimani_2009_TSP, Marnissi_2013_ICIP, Chartrand_2014_ICASSP, Chouzenoux_2013_SIAM, Portilla_2007_SPIE, Zou_2008_AS, Chen_2014_TSP_Convergence, Wipf_2011_tinfo}.
However, few of these methods maintain the convexity of the cost function.
The prescription of non-convex penalties maintaining cost function convexity
was pioneered by Blake, Zisserman, and Nikolova \cite{Blake_1987, Nikolova_1998_ICIP, Nikolova_2010_TIP, Nikolova_2011_chap}, 
and further developed in Refs.~\cite{Bayram_2015_SPL, Bayram_2016_TSP, Chen_2014_TSP_ncogs, Ding_2015_SPL, He_2016_MSSP, Lanza_2016_JMIV, MalekMohammadi_2016_TSP, Parekh_2016_SPL_ELMA, Selesnick_2014_TSP_MSC, Selesnick_SPL_2015}.
These works rely on the presence of both strongly and weakly convex terms,
which is also exploited in \cite{Mollenhoff_2015_SIAM}.

The proposed penalty is expressed as
a differentiable convex function subtracted from the standard penalty (i.e., $ \ell_1 $ norm).
Previous works also use this idea
\cite{Parekh_2015_SPL, Selesnick_2016_TSP_BISR, Parekh_2016_SPL_ELMA}.
But the differentiable convex functions used therein are either separable
\cite{Parekh_2015_SPL, Parekh_2016_SPL_ELMA}
or sums of bivariate functions \cite{Selesnick_2016_TSP_BISR}. 

In parallel with the submission of this paper, Carlsson has also proposed using Moreau envelopes
to prescribe non-trivial convex cost functions \cite{Carlsson_2016_arxiv}.
While the approach in \cite{Carlsson_2016_arxiv} 
starts with a given non-convex cost function (e.g., with the $ \ell_0 $ pseudo-norm penalty)
and seeks the convex envelope, 
our approach starts with the $ \ell_1 $-norm penalty
and seeks a class of convexity-preserving penalties.

Some forms of generalized TV are based on infimal convolution
(related to the Moreau envelope)
\cite{Setzer_2011_CMS, Chambolle_1997_NumerMath, Burger_2016, Becker_2014_JNCA_nomonth}.
But these works
propose convex penalties suitable for non-piecewise-constant signals,
while
we propose non-convex penalties suitable for piecewise-constant signals.

\section{Total Variation Denoising}

\begin{defn}
Given $ y \in \RR^N $ and $ \lam > 0 $, 
total variation denoising is defined as
\begin{align}
	\label{eq:tvd}
	\tvd(y ; \lam)
	& =
	\arg\min_{ x \in \RR^N } 
	\bigl\{
		\thalf \norm{ y - x }_2^2 + \lam  \norm{ D x }_1
	\bigr\}
	\\
	\label{eq:prox}
	& = 
	\prox_{\lam \norm{ D \, \cdot \, }_1 }(y)
\end{align}
where
$ D $ is the $ (N-1) \times N $ matrix
\begin{equation}
	\label{eq:defD}
	D = 
	\begin{bmatrix}
		-1 & 1 & & & \\
		 & -1 & 1 & & \\
		 &  & \ddots & \ddots &  \\
		& & & -1 & 1
	\end{bmatrix}.
\end{equation}
\end{defn}

As indicated in \eqref{eq:prox},  TV denoising is the proximity operator \cite{Combettes_2011_chap}
of the function $ x \mapsto \lam \norm{ D x }_1 $.
It is convenient that TV denoising can be calculated exactly in finite-time \cite{Condat_2013, Dumbgen_2009, Johnson_2013_JCGS, Darbon_2006_JMIV_part1}. 

\section{Moreau Envelope}

Before we define the non-differentiable non-convex penalty
in Sec.~\ref{sec:pen},
we first define a differentiable convex function.
We use the Moreau envelope from convex analysis \cite{Bauschke_2011}.

\begin{defn}
Let $ \alpha \ge 0 $.
We define $ S_\alpha \colon \RR^N \to \RR $
as
\begin{equation}
	\label{eq:defS}
	S_\alpha( x ) = \min_{ v \in \RR^N }
	\bigl\{
		\norm{ D v }_1 + \tfrac{ \alpha }{ 2 } \norm{ x - v }_2^2  \, 
	\bigr\}
\end{equation}
where $ D $ is the first-order difference matrix \eqref{eq:defD}.
\end{defn}

If $ \alpha > 0 $, then $ S_\alpha $ is the \emph{Moreau envelope} of index $ \alpha\inv $ of the function $ x \mapsto \norm{ D x }_1 $.

%%Proposition \ref{prop:calcS} indicates how to evaluate the function $ S_\alpha $.

\begin{prop}
\label{prop:calcS}
The function $ S_\alpha $ can be calculated by
\ifTwoColumn
\begin{align}
	\label{eq:zeroS}
	S_0( x )
	& = 0
	\\
	\nonumber
	S_\alpha(x)
	& = \norm{ D \tvd(x ; 1/\alpha)  }_1
	\\
	&
	\hspace{3em} {} + \tfrac{ \alpha }{ 2 } \norm{ x - \tvd(x ;  1/\alpha) }_2^2  ,
	\quad
	\alpha > 0.
\end{align}
\else
\begin{align}
	\label{eq:zeroS}
	S_0( x )
	& = 0
	\\
	S_\alpha(x)
	& = \norm{ D \tvd(x ; 1/\alpha)  }_1 + \tfrac{ \alpha }{ 2 } \norm{ x - \tvd(x ;  1/\alpha) }_2^2  ,
	\quad
	\alpha > 0.
\end{align}
\fi
\end{prop}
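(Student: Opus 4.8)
The plan is to handle the trivial case $\alpha = 0$ directly and then, in the main case $\alpha > 0$, to recognize the minimization defining $S_\alpha$ in \eqref{eq:defS} as a rescaled instance of the TV denoising problem \eqref{eq:tvd}.

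For $\alpha = 0$ the quadratic term in \eqref{eq:defS} vanishes, so $S_0(x) = \min_{v \in \RR^N} \norm{Dv}_1$. Taking $v = 0$ gives $\norm{Dv}_1 = 0$, and since the $\ell_1$ norm is nonnegative, the minimum value is $0$. This establishes \eqref{eq:zeroS} immediately, independent of $x$.

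For $\alpha > 0$, first I would observe that dividing the objective in \eqref{eq:defS} by the positive constant $\alpha$ does not change the minimizer, so that
\begin{equation*}
	\arg\min_{v} \Bigl\{ \norm{Dv}_1 + \tfrac{\alpha}{2}\norm{x-v}_2^2 \Bigr\}
	= \arg\min_{v} \Bigl\{ \thalf \norm{x-v}_2^2 + \tfrac{1}{\alpha}\norm{Dv}_1 \Bigr\}.
\end{equation*}
The right-hand side is precisely the optimization defining $\tvd(x ; 1/\alpha)$ in \eqref{eq:tvd} with $\lam = 1/\alpha$. Hence the minimizer is $v^\star = \tvd(x ; 1/\alpha)$. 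Substituting $v^\star$ back into the original (unscaled) objective of \eqref{eq:defS} then gives $S_\alpha(x) = \norm{D v^\star}_1 + \tfrac{\alpha}{2}\norm{x - v^\star}_2^2$, which is exactly the claimed expression.

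The argument is essentially a change of the regularization parameter, so I do not expect a substantial obstacle; the one point deserving care is to confirm that the minimizer is attained and unique, so that $v^\star$ is unambiguously the TV denoising output. This follows because the objective in \eqref{eq:defS} is $\alpha$-strongly convex in $v$ (the quadratic term supplies the strong convexity, while $\norm{D\,\cdot\,}_1$ is convex) and coercive, which guarantees a unique minimizer and is consistent with TV denoising being the single-valued proximity operator in \eqref{eq:prox}.
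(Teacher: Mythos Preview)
Your proposal is correct and follows essentially the same approach as the paper's proof: handle $\alpha=0$ by plugging in $v=0$, and for $\alpha>0$ rescale the objective to recognize the minimizer as $\tvd(x;1/\alpha)$, then substitute back. Your added remarks on strong convexity and uniqueness of the minimizer are a welcome clarification but do not change the underlying argument.
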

\begin{proof}
For $ \alpha = 0 $:
Setting $ \alpha = 0 $ and  $ v = 0 $ in \eqref{eq:defS} gives \eqref{eq:zeroS}.
For $ \alpha > 0 $:
By the definition of TV denoising, 
the $ v \in \RR^N $ minimizing the function in \eqref{eq:defS}
is the TV denoising of $ x $, i.e.,
$ v\opt = \tvd( x , 1/\alpha ) $.
\end{proof}

\begin{prop}
Let $ \alpha \ge 0 $.
The function $ S_\alpha $ satisfies
\begin{equation}
	\label{eq:boundS}
	0 \le S_\alpha(x) \le \norm{ D x }_1,
	\ \ \forall x \in \RR^N.
\end{equation}
\end{prop}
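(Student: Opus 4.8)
The plan is to establish the two inequalities in \eqref{eq:boundS} separately, both directly from the variational definition \eqref{eq:defS}, using the elementary facts that an infimum is bounded below by any uniform lower bound on its minimand and bounded above by the value of the minimand at any single feasible point. No auxiliary machinery beyond these observations should be required.

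First I would prove the lower bound. For every $ v \in \RR^N $ the minimand $ \norm{ D v }_1 + \tfrac{\alpha}{2} \norm{ x - v }_2^2 $ is a sum of two nonnegative terms: $ \norm{ D v }_1 \ge 0 $ always, and $ \tfrac{\alpha}{2} \norm{ x - v }_2^2 \ge 0 $ precisely because $ \alpha \ge 0 $. Thus the minimand is nonnegative for all $ v $, and minimizing over $ v $ preserves this, giving $ S_\alpha(x) \ge 0 $.

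Next I would prove the upper bound by the standard device of testing a convenient feasible point. Evaluating the minimand at $ v = x $ annihilates the quadratic term, leaving precisely $ \norm{ D x }_1 $. Since $ S_\alpha(x) $ is the minimum over all $ v \in \RR^N $, it cannot exceed this particular value, so $ S_\alpha(x) \le \norm{ D x }_1 $.

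I do not expect any genuine obstacle here, as the argument rests only on nonnegativity and the defining minimality of $ S_\alpha $. In particular it covers the boundary case $ \alpha = 0 $ uniformly, consistent with \eqref{eq:zeroS}: there $ S_0(x) = 0 $ and indeed $ 0 \le 0 \le \norm{ D x }_1 $ since $ \norm{ D x }_1 \ge 0 $. The only point worth a moment's care is whether the minimum in \eqref{eq:defS} is attained, so that writing "$\min$" is justified; this holds for $ \alpha > 0 $ by Proposition~\ref{prop:calcS}, but for the bounds themselves reasoning with the infimum already suffices, so no appeal to attainment is strictly needed.
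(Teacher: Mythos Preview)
Your proposal is correct and follows essentially the same approach as the paper's own proof: the lower bound comes from nonnegativity of the minimand, and the upper bound from evaluating at $v = x$. The paper's argument is terser and omits your side remarks on the $\alpha = 0$ case and attainment of the minimum, but the substance is identical.
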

\begin{proof}
From \eqref{eq:defS}, we have
$
	S_\alpha( x )
	 \le 
	\norm{ D v }_1 + (\alpha/2) \norm{ x - v }_2^2  
$
for all $ v \in \RR^N $.
In particular, $ v = x $ leads to $ S_\alpha(x) \le \norm{ D x }_1 $.
Also, $ S_\alpha(x) \ge 0 $ since $ S_\alpha(x) $ is defined as the minimum of a non-negative function.
\end{proof}

\begin{prop}
Let $ \alpha \ge 0 $. 
The function $ S_\alpha $ is convex and differentiable. 
\end{prop}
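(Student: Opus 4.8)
The plan is to separate the trivial case from the substantive one. For $\alpha = 0$, equation \eqref{eq:zeroS} gives $S_0 \equiv 0$, a constant function that is manifestly convex and differentiable, so it remains only to treat $\alpha > 0$, where $S_\alpha$ is a genuine Moreau envelope of $x \mapsto \norm{Dx}_1$.

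For convexity I would argue by partial minimization of a jointly convex function. Write the inner objective as
\[
	g(x,v) = \norm{Dv}_1 + \tfrac{\alpha}{2}\norm{x-v}_2^2 ,
\]
so that $S_\alpha(x) = \min_v g(x,v)$. The term $\norm{Dv}_1$ depends only on $v$ and is convex there, hence jointly convex in $(x,v)$; the term $\tfrac{\alpha}{2}\norm{x-v}_2^2$ is a convex quadratic composed with the linear map $(x,v)\mapsto x-v$, hence also jointly convex. Thus $g$ is jointly convex on $\RR^N \times \RR^N$, and since $g \ge 0$ the partial minimum $x \mapsto \inf_v g(x,v)$ is everywhere finite. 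Invoking the standard fact that partial minimization of a jointly convex function produces a convex function then yields convexity of $S_\alpha$.

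For differentiability I would first note that, for fixed $x$, the map $v \mapsto g(x,v)$ is strongly convex with modulus $\alpha > 0$ (the quadratic term supplies the strong convexity while $\norm{Dv}_1$ is convex), so the minimizer is unique and equals $v\opt = \tvd(x; 1/\alpha)$ by Proposition~\ref{prop:calcS}. This single-valuedness of the proximal map is precisely the condition under which the Moreau envelope of a proper, lower-semicontinuous, convex function is differentiable. Since $x \mapsto \norm{Dx}_1$ is such a function (a norm composed with a linear map, hence continuous and convex), the standard envelope theory \cite{Bauschke_2011} applies and gives both differentiability and the gradient
\[
	\nabla S_\alpha(x) = \alpha\bigl( x - \tvd(x; 1/\alpha) \bigr).
\]

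The main obstacle is the differentiability claim, because the inner function $\norm{Dv}_1$ is itself non-differentiable; the key point is that minimizing over $v$ smooths the kinks away. If a self-contained argument is preferred to citing \cite{Bauschke_2011}, I would derive the gradient directly: uniqueness of $v\opt$ permits an envelope (Danskin-type) argument, differentiating $g$ in $x$ at the optimal $v$ to obtain the candidate gradient $\alpha(x - v\opt)$, after which one upgrades directional derivatives to a genuine gradient by appealing to continuity of $x \mapsto v\opt$, which follows from the firm nonexpansiveness of the proximal map.
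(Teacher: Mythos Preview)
Your argument is correct. The paper's own proof is a one-line citation to Proposition~12.15 of \cite{Bauschke_2011}, so your proposal is strictly more informative: you supply a direct partial-minimization argument for convexity and then, for differentiability, either invoke the same Moreau-envelope machinery from \cite{Bauschke_2011} or sketch a Danskin-type derivation. The mathematical content is the same (both rest on the fact that $S_\alpha$ is the Moreau envelope of the proper, lsc, convex function $x\mapsto\norm{Dx}_1$); what you add is an unpacking of why that citation applies, which the paper simply assumes.
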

\begin{proof}
It follows from Proposition 12.15 in Ref.~\cite{Bauschke_2011}.
\end{proof}

\begin{prop}
\label{prop:Sgrad}
Let $ \alpha \ge 0 $. 
The gradient of $ S_\alpha $ is given by
\begin{align}
	\label{eq:gradS0}
	\nabla S_0(x)
	& = 0
	\\
	\label{eq:gradS}
	\nabla S_\alpha(x)
	& =
	\alpha \bigl( x - \tvd( x ; 1 / \alpha ) \bigr),
	\quad \alpha > 0
\end{align}
where $ \tvd $ denotes total variation denoising \eqref{eq:tvd}.
\end{prop}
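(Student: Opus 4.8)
The plan is to handle the two cases separately. The case $\alpha = 0$ is immediate: Proposition~\ref{prop:calcS} gives $S_0 \equiv 0$, a constant function, so its gradient vanishes identically and \eqref{eq:gradS0} follows at once. For $\alpha > 0$, I would recognize $S_\alpha$ as the Moreau envelope of $f \colon v \mapsto \norm{Dv}_1$ and write $g(x,v) = f(v) + \tfrac{\alpha}{2}\norm{x-v}_2^2$, with $v\opt(x)$ the minimizer in \eqref{eq:defS}. Because $g(x,\cdot)$ is $\alpha$-strongly convex, this minimizer is unique, and by \eqref{eq:prox} it equals $\tvd(x;1/\alpha)$. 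The target identity \eqref{eq:gradS} then reads $\nabla S_\alpha(x) = \alpha\bigl(x - v\opt(x)\bigr) = \nabla_x g\bigl(x, v\opt(x)\bigr)$; that is, one must show that $S_\alpha$ may be differentiated in $x$ by freezing the inner variable at the optimizer and differentiating $g$ only in its first slot.

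I would establish this freezing by a Danskin/envelope-theorem sandwich, setting $p(x) := \alpha\bigl(x - v\opt(x)\bigr)$ as the candidate gradient. The upper bound comes from the suboptimal choice $v = v\opt(x)$ in $g(x',\cdot)$: expanding $\tfrac{\alpha}{2}\norm{x' - v\opt(x)}_2^2$ about $x$ gives $S_\alpha(x') \le S_\alpha(x) + \langle p(x), x'-x\rangle + \tfrac{\alpha}{2}\norm{x'-x}_2^2$. A symmetric inequality with $x$ and $x'$ exchanged yields the matching lower bound, up to a correction term $\langle p(x') - p(x),\, x'-x\rangle$; this term is $O(\norm{x'-x}_2^2)$ because the proximity operator $x \mapsto v\opt(x)$ is (firmly) nonexpansive, hence $1$-Lipschitz, so $p$ is Lipschitz as well. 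Combining the two quadratic bounds shows $S_\alpha(x') - S_\alpha(x) = \langle p(x), x'-x\rangle + o(\norm{x'-x}_2)$, which is exactly differentiability with $\nabla S_\alpha(x) = p(x) = \alpha\bigl(x - \tvd(x;1/\alpha)\bigr)$, establishing \eqref{eq:gradS}.

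The main obstacle is precisely this differentiation through the minimization: since $f = \norm{D\cdot}_1$ is non-differentiable, a naive chain rule would produce a spurious term pairing a subgradient of $f$ at $v\opt(x)$ with the sensitivity of $v\opt$ to $x$, and the crux of the argument is showing that this term contributes nothing at first order. The first-order optimality condition $\alpha\bigl(x - v\opt(x)\bigr) \in \partial f\bigl(v\opt(x)\bigr)$ is what forces the linear parts of the upper and lower bounds to agree, while the strong convexity of $g(x,\cdot)$ (guaranteeing uniqueness of $v\opt$ and the nonexpansiveness of $x \mapsto v\opt(x)$) is what closes the sandwich. Most economically, however, I would simply invoke the standard fact that the Moreau envelope of a proper lower-semicontinuous convex function is differentiable with gradient $\tfrac{1}{\mu}\bigl(\mathrm{Id} - \prox_{\mu f}\bigr)$ --- the same reference \cite{Bauschke_2011} already used for the preceding proposition --- which, upon substituting $\mu = 1/\alpha$ and $\prox_{\mu f}(x) = \tvd(x;1/\alpha)$, delivers \eqref{eq:gradS} directly.
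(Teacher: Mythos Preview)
Your proposal is correct, and the ``most economical'' route you identify at the end --- invoking the Moreau-envelope gradient formula from \cite{Bauschke_2011} with $\mu = 1/\alpha$ and $\prox_{\mu f} = \tvd(\,\cdot\,;1/\alpha)$ --- is exactly what the paper does (it cites Proposition~12.29 of \cite{Bauschke_2011}). The Danskin/sandwich argument you sketch beforehand is a valid self-contained alternative that the paper does not give; it trades the black-box citation for an explicit two-sided quadratic bound plus nonexpansiveness of the proximal map, which is instructive but unnecessary here since the reference is already in play.
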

\begin{proof}
Since $ S_\alpha $ is the Moreau envelope of index $ \alpha\inv $ of the function $ x \mapsto \norm{ D x }_1 $ when $ \alpha > 0 $, 
it follows by Proposition 12.29 in Ref.~\cite{Bauschke_2011} that
\begin{equation}
	\nabla S_\alpha(x)
	= 
	\alpha \bigl( x - \prox_{(1/\alpha)\norm{ D \, \cdot \, }_1 }(x) \bigr).
\end{equation}
This proximity operator is TV denoising, giving \eqref{eq:gradS}.
\end{proof}

\section{Non-convex Penalty}
\label{sec:pen}

To strongly induce sparsity of $ D x $, we define a non-convex generalization of the standard TV penalty.
The new penalty is defined by subtracting a differentiable convex function from the standard penalty. 

\begin{defn}
Let $ \alpha \ge 0 $.
We define the penalty $ \psi_\alpha \colon \RR^N \to \RR $ as
\begin{equation}
	\label{eq:defpsi}
	\psi_\alpha( x ) = \norm{ D x }_1 - S_\alpha( x ) 
\end{equation}
where $ D $ is the matrix \eqref{eq:defD}
and
$ S_\alpha $ is defined by \eqref{eq:defS}.
\end{defn}

The proposed penalty is upper bounded by the standard TV penalty,
which is recovered as a special case. 

\begin{prop}
Let $ \alpha \ge 0 $. 
The penalty $ \psi_\alpha $ satisfies 
\begin{equation}
	\psi_0( x ) = \norm{ D x }_1,
	\ \ \forall x \in \RR^N
\end{equation}
and
\begin{equation}
	\label{eq:psibound}
	0 \le \psi_\alpha(x) \le \norm{ D x }_1,
	\ \ \forall x \in \RR^N.
\end{equation}
\end{prop}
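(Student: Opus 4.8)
The plan is to derive all three claims directly from the definition \eqref{eq:defpsi} of the penalty, $\psi_\alpha(x) = \norm{Dx}_1 - S_\alpha(x)$, together with the facts about $S_\alpha$ already established in the preceding propositions, namely $S_0(x) = 0$ from \eqref{eq:zeroS} and the two-sided bound $0 \le S_\alpha(x) \le \norm{Dx}_1$ from \eqref{eq:boundS}. No new estimate is required; each assertion is an immediate algebraic consequence of these, so the proof amounts to transporting known bounds on $S_\alpha$ through the defining subtraction.

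First I would dispatch the equality. Setting $\alpha = 0$ in \eqref{eq:defpsi} and substituting $S_0(x) = 0$ gives $\psi_0(x) = \norm{Dx}_1 - 0 = \norm{Dx}_1$ for every $x \in \RR^N$.

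Next I would establish the two-sided bound \eqref{eq:psibound}. For the upper bound, the non-negativity $S_\alpha(x) \ge 0$ immediately yields $\psi_\alpha(x) = \norm{Dx}_1 - S_\alpha(x) \le \norm{Dx}_1$. For the lower bound, the estimate $S_\alpha(x) \le \norm{Dx}_1$ gives $\psi_\alpha(x) = \norm{Dx}_1 - S_\alpha(x) \ge 0$. Together these prove \eqref{eq:psibound} for all $\alpha \ge 0$ and all $x \in \RR^N$.

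There is no genuine obstacle here: the entire content rests on the earlier bound \eqref{eq:boundS}, which was itself obtained from the test choice $v = x$ and the non-negativity of the minimand in \eqref{eq:defS}. The only care required is to keep the direction of each inequality straight as it passes through the negative sign in $\psi_\alpha = \norm{D\,\cdot\,}_1 - S_\alpha$, since the upper bound on $\psi_\alpha$ comes from the \emph{lower} bound on $S_\alpha$ and vice versa.
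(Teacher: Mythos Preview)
Your proposal is correct and follows exactly the paper's approach: the paper's proof simply states that the result follows from \eqref{eq:zeroS} and \eqref{eq:boundS}, and you have spelled out precisely how. Your added remark about the inequality directions flipping through the minus sign is a helpful gloss but not a departure in method.
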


\begin{proof}
It follows from \eqref{eq:zeroS} and \eqref{eq:boundS}.
\end{proof}

When a convex function is subtracted from another convex function
[as in \eqref{eq:defpsi}],
the resulting function may well be negative on part of its domain. 
Inequality \eqref{eq:psibound} states that the proposed penalty $ \psi_\alpha $ avoids this fate.
This is relevant because the penalty function should be non-negative.

Figures in the supplemental material show examples
of the proposed penalty $ \psi_\alpha $ and the function $ S_\alpha $.

%%We highlight that the penalty $ \psi_\alpha $ is a non-separable function of $ D x $
%%when $ N > 2 $.
%%That is, 
%%$ \psi_\alpha(x) $ can not be written as $ \sum_n \phi( [Dx]_n ) $ for any scalar function $ \phi $
%%when $ \alpha > 0 $.

\section{Enhanced TV Denoising}

We define `Moreau-enhanced' TV denoising.
If $ \alpha > 0 $, then the proposed penalty penalizes large amplitude values of $ D x $ 
less than the $ \ell_1 $ norm does (i.e., $ \psi_\alpha(x) \le \norm{D x}_1 $),
hence it is less likely to underestimate jump discontinuities.

\begin{defn}
Given $ y \in \RR^N $,  $ \lam > 0 $,  and $ \alpha \ge 0 $, 
we define Moreau-enhanced total variation denoising as
\begin{equation}
	\label{eq:mtvd}
	\mtvd( y ; \lam, \alpha) =
	\arg \min_{ x \in \RR^N }
	\bigl\{
		\thalf \norm{ y - x }_2^2 + \lam \psi_\alpha( x )
	\bigr\}
\end{equation}
where $ \psi_\alpha $ is given by \eqref{eq:defpsi}.
\end{defn}

The parameter $ \alpha $ controls the non-convexity of the penalty. 
If $ \alpha = 0 $, then the penalty is convex
and Moreau-enhanced TV denoising reduces to TV denoising.
Greater values of $ \alpha $ make the penalty more non-convex. 
What is the greatest value of $ \alpha $ that maintains convexity 
of the cost function?
The critical value is given by Theorem \ref{thm:cond}.

\begin{theorem}
\label{thm:cond}
Let $ \lam > 0 $ and $ \alpha \ge 0 $.
Define 
$ F_\alpha \colon \RR^N \to \RR $ 
as
\begin{equation}
	\label{eq:defF}
	F_\alpha(x) = \thalf \norm{ y - x }_2^2 + \lam \psi_\alpha( x )
\end{equation}
where $ \psi_\alpha $ is given by \eqref{eq:defpsi}.
If
\begin{equation}
	\label{eq:convcond}
	0 \le \alpha \le  1 / \lam 
\end{equation}
then $ F_\alpha $ is convex.
If $ 0 \le \alpha < 1/\lam $ then $ F_\alpha $ is strongly convex.
\end{theorem}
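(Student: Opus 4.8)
The plan is to isolate the sole source of non-convexity in $ F_\alpha $ and to show that the quadratic data term dominates it precisely when $ \alpha \le 1/\lam $. Substituting \eqref{eq:defpsi} into \eqref{eq:defF}, the cost function splits as
\begin{equation}
	F_\alpha(x) = \underbrace{ \thalf \norm{ y - x }_2^2 - \lam S_\alpha(x) }_{ =:\, g(x) } + \lam \norm{ D x }_1 .
\end{equation}
Since $ x \mapsto \lam \norm{ D x }_1 $ is convex, it suffices to prove that $ g $ is convex (respectively, strongly convex) under the stated condition; adding the convex $ \ell_1 $ term then preserves (strong) convexity of $ F_\alpha $.

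The key step, and the main obstacle, is to tame the concave contribution $ -\lam S_\alpha $. I would establish that
\begin{equation}
	\label{eq:Sqcvx}
	x \mapsto \tfrac{\alpha}{2} \norm{ x }_2^2 - S_\alpha(x) \ \text{ is convex.}
\end{equation}
A self-contained way to see this is to expand \eqref{eq:defS} and convert the minimization into a maximization,
\begin{equation}
	\tfrac{\alpha}{2} \norm{ x }_2^2 - S_\alpha(x) = \max_{ v \in \RR^N } \bigl\{ \alpha \langle x, v \rangle - \norm{ D v }_1 - \tfrac{\alpha}{2} \norm{ v }_2^2 \bigr\} ,
\end{equation}
which is a pointwise supremum of functions affine in $ x $ and hence convex. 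Alternatively, Proposition \ref{prop:Sgrad} gives $ \nabla S_\alpha(x) = \alpha\bigl( x - \tvd(x; 1/\alpha) \bigr) $; since the proximity operator $ \tvd(\,\cdot\,; 1/\alpha) $ is firmly nonexpansive, so is $ x \mapsto x - \tvd(x; 1/\alpha) $, which is therefore $ 1 $-Lipschitz, making $ \nabla S_\alpha $ an $ \alpha $-Lipschitz map; the standard equivalence between $ \alpha $-Lipschitz gradients and convexity of $ \tfrac{\alpha}{2}\norm{\,\cdot\,}_2^2 - S_\alpha $ again yields \eqref{eq:Sqcvx}.

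With \eqref{eq:Sqcvx} secured, the conclusion is routine. I would complete the square in $ g $ and add and subtract $ \tfrac{\lam\alpha}{2}\norm{ x }_2^2 $, obtaining
\begin{equation}
	g(x) = \tfrac{ 1 - \lam\alpha }{ 2 } \norm{ x }_2^2 + \lam \bigl( \tfrac{\alpha}{2} \norm{ x }_2^2 - S_\alpha(x) \bigr) - \langle y, x \rangle + \thalf \norm{ y }_2^2 .
\end{equation}
The middle term is convex by \eqref{eq:Sqcvx} and the remaining two terms are affine, so the behavior of $ g $ is dictated entirely by the leading quadratic. Its coefficient $ 1 - \lam\alpha $ is nonnegative exactly when $ \alpha \le 1/\lam $, giving convexity, and strictly positive exactly when $ \alpha < 1/\lam $, in which case $ g $, and therefore $ F_\alpha $, inherits the strong convexity of $ \tfrac{ 1 - \lam\alpha }{ 2 }\norm{\,\cdot\,}_2^2 $. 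This establishes both assertions of the theorem.
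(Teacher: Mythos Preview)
Your argument is correct and is essentially the paper's proof, reorganized: the paper carries out the same min-to-max manipulation directly on $F_\alpha$ to arrive at $\tfrac{1}{2}(1-\lam\alpha)\norm{x}_2^2 + \lam\norm{Dx}_1 + \max_v g(x,v)$ with $g(\cdot,v)$ affine, whereas you first isolate the lemma that $\tfrac{\alpha}{2}\norm{\cdot}_2^2 - S_\alpha$ is convex and then assemble $g$. Your alternative route through the $\alpha$-Lipschitz gradient of $S_\alpha$ (via firm nonexpansiveness of the proximity operator) is a nice addition that the paper does not give.
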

\begin{proof}
We write the cost function as
\ifTwoColumn
   \begin{align}
	F_\alpha(x) 
	& = 
	\thalf \norm{ y - x }_2^2 + \lam \norm{ D x }_1 - \lam S_\alpha( x )  
	\\
	\nonumber
	& = 
	\thalf \norm{ y - x }_2^2 + \lam \norm{ D x }_1 
		\\
		&
		\hspace{3em}
		{} - 
		 \lam \min_{ v \in \RR^N } \bigl\{  \norm{ D v }_1 + \tfrac{ \alpha }{ 2 } \norm{ x - v }_2^2  \, \bigr\}
	\\
	\nonumber
	& = 
	 \max_{ v \in \RR^N }
	 \bigl\{   
		\thalf \norm{ y - x }_2^2 + \lam \norm{ D x }_1
		\\
		&
		\hspace{6em}
		{} - 
		\lam \norm{ D v }_1 - \tfrac{ \lam \alpha }{ 2 } \norm{ x - v }_2^2  
	 \, \bigr\}
	\\
	& = 
	 \max_{ v \in \RR^N }
	 \bigl\{   
		\thalf ( 1 - \lam \alpha ) \norm{ x }_2^2 + \lam \norm{ D x }_1 + g(x, v)
	 \, \bigr\}
	\\
	& = 
	\thalf ( 1 - \lam \alpha ) \norm{ x }_2^2 + \lam \norm{ D x }_1
	+
	\max_{ v \in \RR^N }
	g(x, v)
   \end{align}
\else
   \begin{align}
	F_\alpha(x) 
	& = 
	\thalf \norm{ y - x }_2^2 + \lam \norm{ D x }_1 - \lam S_\alpha( x )  
	\\
	& = 
	\thalf \norm{ y - x }_2^2 + \lam \norm{ D x }_1 - 
		 \lam \min_{ v \in \RR^N } \bigl\{  \norm{ D v }_1 + \tfrac{ \alpha }{ 2 } \norm{ x - v }_2^2  \, \bigr\}
	\\
	& = 
	 \max_{ v \in \RR^N }
	 \bigl\{   
		\thalf \norm{ y - x }_2^2 + \lam \norm{ D x }_1 - 
		\lam \norm{ D v }_1 - \tfrac{ \lam \alpha }{ 2 } \norm{ x - v }_2^2  
	 \, \bigr\}
	\\
	& = 
	 \max_{ v \in \RR^N }
	 \bigl\{   
		\thalf ( 1 - \lam \alpha ) \norm{ x }_2^2 + \lam \norm{ D x }_1 + g(x, v)
	 \, \bigr\}
	\\
	& = 
	\thalf ( 1 - \lam \alpha ) \norm{ x }_2^2 + \lam \norm{ D x }_1
	+
	\max_{ v \in \RR^N }
	g(x, v)
   \end{align}
\fi
where $ g(x, v) $ is affine in $ x $.
The last term is convex as it is the point-wise maximum of a set of convex functions.
Hence, $ F_\alpha $ is a convex function if $ 1 - \lam \alpha \ge 0 $.
If  $ 1 - \lam \alpha > 0 $, then
$ F_\alpha $ is strongly convex (and strictly convex).
\end{proof}

\section{Algorithm}
\label{sec:alg}

\begin{prop}
Let $ y \in \RR^N $,  $ \lam > 0 $,  and $ 0 < \alpha < 1/\lam $.
Then $ x\iter{k} $ produced by the iteration
\begin{subequations}
\label{eq:alg}
\begin{align}
	z\iter{k} & = 
		y + \lam \alpha \bigl(  x\iter{k} -  \tvd( x\iter{k} ; 1 / \alpha ) \bigr)
	\\
	x\iter{k+1} & =
	\tvd( z\iter{k} ; \lam ).
\end{align}
\end{subequations}
converges to the solution of the Moreau-enhanced TV denoising problem \eqref{eq:mtvd}.
\end{prop}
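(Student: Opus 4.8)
The plan is to recognize the iteration \eqref{eq:alg} as forward--backward (proximal--gradient) splitting and then invoke a standard convergence result for that scheme. First I would split the cost function as $ F_\alpha = f + g $, where
\[
	f(x) = \thalf \norm{ y - x }_2^2 - \lam S_\alpha(x),
	\qquad
	g(x) = \lam \norm{ D x }_1 .
\]
The term $ g $ is proper, lower semicontinuous, and convex, and its proximity operator is TV denoising by \eqref{eq:prox}. The term $ f $ is differentiable, with $ \nabla f(x) = (x - y) - \lam \nabla S_\alpha(x) $; substituting the gradient formula \eqref{eq:gradS} from Proposition \ref{prop:Sgrad} yields
\[
	\nabla f(x) = (x - y) - \lam \alpha \bigl( x - \tvd(x ; 1/\alpha) \bigr).
\]

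Next I would verify that \eqref{eq:alg} coincides with the unit-step forward--backward iteration $ x\iter{k+1} = \prox_{g}\bigl( x\iter{k} - \nabla f(x\iter{k}) \bigr) $. The forward step gives
\[
	x\iter{k} - \nabla f(x\iter{k})
	= y + \lam \alpha \bigl( x\iter{k} - \tvd(x\iter{k} ; 1/\alpha) \bigr),
\]
which is exactly $ z\iter{k} $, and the backward step gives $ x\iter{k+1} = \prox_{\lam \norm{ D \, \cdot \, }_1}(z\iter{k}) = \tvd(z\iter{k} ; \lam) $, matching \eqref{eq:alg}.

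The crux is verifying the hypotheses that make unit step size admissible, namely that $ f $ is convex with a sufficiently cocoercive gradient. Here I would rewrite
\[
	\nabla f(x) + y = (1 - \lam \alpha)\, x + \lam \alpha \, \tvd(x ; 1/\alpha),
\]
a convex combination of the identity and the proximity operator $ \tvd(\cdot ; 1/\alpha) $, which is legitimate because $ 0 < \lam \alpha < 1 $. Since every proximity operator is firmly nonexpansive and the set of firmly nonexpansive operators is convex, $ \nabla f $ is firmly nonexpansive, i.e., $ 1 $-cocoercive. This simultaneously shows that $ \nabla f $ is monotone (so $ f $ is convex) and that it is $ 1 $-Lipschitz.

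Finally, with $ f $ convex and $ 1 $-cocoercive and $ g $ proper lsc convex, the forward--backward convergence theorem (e.g., in \cite{Bauschke_2011, Combettes_2011_chap}) guarantees that the iterates converge to a minimizer of $ F_\alpha $ for any step size in $ (0, 2) $, and in particular for the unit step size used in \eqref{eq:alg}. Because $ \alpha < 1/\lam $ strictly, $ F_\alpha $ is strongly convex by Theorem \ref{thm:cond}, so the minimizer is unique and $ x\iter{k} $ converges to it. I expect the cocoercivity bookkeeping in the third step to be the main obstacle: one must certify that $ \nabla f $ is cocoercive with a large enough constant for the unit step to be valid, and the cleanest route is the firm nonexpansiveness of $ \tvd $ rather than a direct Lipschitz estimate, which would only yield the weaker bound $ L \le 1 + \lam \alpha $.
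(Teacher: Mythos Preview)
Your proposal is correct and follows essentially the same approach as the paper: the same splitting $F_\alpha = f + g$ with $f(x) = \thalf\norm{y-x}_2^2 - \lam S_\alpha(x)$ and $g(x) = \lam\norm{Dx}_1$, identification of \eqref{eq:alg} as forward--backward with unit step, and appeal to the standard FBS convergence theorem. The only notable difference is in the justification of the step-size condition: the paper simply asserts that subtracting $\lam S_\alpha$ does not raise the Lipschitz constant of $\nabla f$ above $1$, whereas your argument via the firm nonexpansiveness of the convex combination $(1-\lam\alpha)\,\mathrm{Id} + \lam\alpha\,\tvd(\cdot;1/\alpha)$ is more explicit and simultaneously certifies the convexity of $f$.
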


\begin{proof}
If the cost function \eqref{eq:defF} is strongly convex, then the minimizer 
can be calculated using 
the forward-backward splitting (FBS) algorithm \cite{Bauschke_2011, Combettes_2011_chap}.
This algorithm minimizes a function of the form
\begin{equation}
	\label{eq:FBSd}
	F(x) = f_1(x) + f_2(x)
\end{equation}
where
both $ f_1 $ and $ f_2 $ are convex and $ \nabla f_1 $ is additionally Lipschitz continuous.
The FBS algorithm is given by
\begin{subequations}
\label{eq:fbs}
\begin{align}
	z\iter{k} & = 
	x\iter{k} - \mu \bigl[  \nabla f_1( x\iter{k} ) \bigr]
	\\
	x\iter{k+1} & = \arg \min_x \big\{ \thalf \norm{ z\iter{k} - x }_2^2 + \mu f_2( x ) \big\}
\end{align}
\end{subequations}
where
$ 0 < \mu < 2 / \rho $
and
$ \rho $ is the Lipschitz constant of $ \nabla f_1 $.
The iterates $ x\iter{k} $ 
 converge to a minimizer of $ F $.

To apply the FBS algorithm to the proposed cost function \eqref{eq:defF},
we write it as
\begin{align}
	F_\alpha(x) 
	& = 
	\thalf \norm{ y - x }_2^2 + \lam \psi_\alpha( x ),
	\\
	& = 
	\thalf \norm{ y - x }_2^2 + \lam \norm{ D x }_1 - \lam S_\alpha( x )  
	\\
	& = 
	f_1(x) + f_2(x)
\end{align}
where
\begin{subequations}
\label{eq:deff12}
\begin{align}
	\label{eq:deff1}
	f_1(x)
	& = 
	\thalf \norm{ y - x }_2^2
	-
	\lam S_\alpha( x )  
	\\
	\label{eq:deff2}
	f_2(x)
	& = 
	\lam \norm{ D x }_1.
\end{align}
\end{subequations}
The gradient of $ f_1 $ is given by
\begin{align}
	\nabla f_1(x) 
	& =
	x - y - \lam \nabla S_\alpha( x )
	\\
	& =
	x - y - \lam \alpha \bigl(  x -  \tvd( x ; 1 / \alpha ) \bigr)
\end{align}
using Proposition \ref{prop:Sgrad}.
Subtracting $ S_\alpha $ from $ f_1 $ does not increase the Lipschitz constant of $ \nabla f_1 $,
the value of which is 1.
Hence, we may set $ 0 < \mu < 2 $.

Using \eqref{eq:deff12}, the FBS algorithm \eqref{eq:fbs} becomes
\ifTwoColumn
   \begin{subequations}
   \begin{align}
   	\nonumber
	z\iter{k} & = 
	x\iter{k} - \mu \bigl[
		x\iter{k} - y
	\\
		& \hspace{4em} {} - \lam \alpha \bigl(  x\iter{k} -  \tvd( x\iter{k} ; 1 / \alpha ) \bigr)
	\bigr]
	\\
	\label{eq:xupdate}
	x\iter{k+1} & = \arg \min_x \big\{ \thalf \norm{ z\iter{k} - x }_2^2 + \mu \lam \norm{ D x }_1 \big\}.
   \end{align}
   \end{subequations}
\else
   \begin{subequations}
   \begin{align}
	z\iter{k} & = 
	x\iter{k} - \mu \bigl[
		x\iter{k} - y - \lam \alpha \bigl(  x\iter{k} -  \tvd( x\iter{k} ; 1 / \alpha ) \bigr)
	\bigr]
	\\
	\label{eq:xupdate}
	x\iter{k+1} & = \arg \min_x \big\{ \thalf \norm{ z\iter{k} - x }_2^2 + \mu \lam \norm{ D x }_1 \big\}.
   \end{align}
   \end{subequations}
\fi
Note that \eqref{eq:xupdate} is TV denoising \eqref{eq:tvd}.
Using the value $ \mu = 1 $ gives iteration \eqref{eq:alg}.
(Experimentally, we found this value yields fast convergence.) 
\end{proof}

Each iteration of \eqref{eq:alg} entails solving two standard TV denoising problems.
In this work, we calculate TV denoising using the fast exact C language program by Condat \cite{Condat_2013}.
Like the
% classical
iterative shrinkage/thresholding algorithm (ISTA) \cite{Daubechies_2004, Fig_2003_TIP},
algorithm \eqref{eq:alg} 
can be accelerated in various ways.

 We suggest not setting $ \alpha $ too close to the critical value $ 1/\lam $ 
because the FBS algorithm generally converges faster when 
the cost function is more strongly convex ($ \alpha < 1 $).

In summary, the proposed
Moreau-enhanced TV denoising method comprises the steps:
\begin{enumerate}
\item
Set the regularization parameter $ \lam $ ($ \lam > 0 $).
\item
Set the non-convexity parameter $ \alpha $ ($ 0 \le \alpha < 1/\lam $).
\item
Initialize $ x\iter{0} = 0 $.
\item
Run iteration \eqref{eq:alg} until convergence.
\end{enumerate}

\section{Optimality Condition}

To avoid terminating the iterative algorithm too early, 
it is useful to verify convergence using an optimality condition.

\begin{prop}
\label{prop:opt}
Let $ y \in \RR^N $,  $ \lam > 0 $,  and $ 0 < \alpha < 1/\lam $.
If $ x $ is a solution to \eqref{eq:mtvd},
then
\begin{equation}
	\label{eq:opt}
	\bigl[ C \bigl( (x - y)/\lam + \alpha (  \tvd( x ; 1 / \alpha ) - x )  \bigr)     \bigr]_n 
	\in
	\sign( [ D x ]_n )
\end{equation}
for $ n = 0, \dots, N-1 $,
where $ C \in \RR^{ (N-1) \times N } $ is
given by
\begin{equation}
	\label{eq:defC}
	C_{m, n} =
	\begin{cases}
		1, \  & m \ge n
		\\
		0, & m < n,
	\end{cases}
	\quad
	\text{i.e.,}
	\quad	
	[C x]_n = \sum_{ m \le n } x_m
\end{equation}
and $ \sign $ is the set-valued signum function
\begin{equation}
	\sign(t) = 
	\begin{cases}
		\{ -1 \}, \ \ & t < 0
		\\
		[-1, 1], & t = 0
		\\
		\{ 1 \}, & t > 0.
	\end{cases}
\end{equation}
\end{prop}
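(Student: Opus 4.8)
The plan is to apply Fermat's rule to the minimization \eqref{eq:mtvd} and then convert the resulting subgradient inclusion into the stated componentwise sign condition. Since $0 < \alpha < 1/\lam$, Theorem \ref{thm:cond} guarantees that $F_\alpha$ is strongly convex, so it possesses a unique minimizer $x$, characterized by $0 \in \partial F_\alpha(x)$. First I would split the cost as $F_\alpha = \phi + h$, where $\phi(x) = \thalf \norm{ y - x }_2^2 - \lam S_\alpha(x)$ is differentiable (because $S_\alpha$ is) and $h(x) = \lam \norm{ D x }_1$ is convex but nonsmooth. For a sum of a differentiable function and a convex function, the necessary first-order condition at a minimizer is $-\nabla\phi(x) \in \partial h(x)$; this follows directly from the directional-derivative characterization of optimality and does not require $\phi$ itself to be convex.

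Next I would evaluate the two pieces. Using Proposition \ref{prop:Sgrad}, $\nabla\phi(x) = (x - y) - \lam\alpha\bigl( x - \tvd(x; 1/\alpha)\bigr)$. For the nonsmooth term, the convex chain rule for the composition of $\norm{\,\cdot\,}_1$ with the linear map $D$ gives $\partial h(x) = \bigl\{ \lam D\tr s : s \in \RR^{N-1},\ s_n \in \sign([Dx]_n) \bigr\}$. Substituting into $-\nabla\phi(x) \in \partial h(x)$ and dividing by $\lam$ shows that there exists an admissible $s$ with $D\tr s = (y-x)/\lam + \alpha\bigl(x - \tvd(x;1/\alpha)\bigr)$, that is, $D\tr s = -w$, where $w$ denotes the argument of $C$ in \eqref{eq:opt}.

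The crux is then to recover $s$ from $D\tr s = -w$ so as to expose the sign condition. The key identity is $C D\tr = -I_{N-1}$, which follows from a short telescoping computation: $C$ in \eqref{eq:defC} is the cumulative-sum operator and $D$ is the first-order difference operator, so their composition collapses to $-I_{N-1}$. Left-multiplying $D\tr s = -w$ by $C$ then gives $-s = -Cw$, hence $s = Cw$; reading off the $n$-th component and recalling $s_n \in \sign([Dx]_n)$ reproduces exactly \eqref{eq:opt}.

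The main obstacle I anticipate is establishing and correctly using $C D\tr = -I_{N-1}$: since $D\tr \in \RR^{N\times(N-1)}$ has full column rank but is not square, $C$ acts as a left inverse of $-D\tr$ rather than a genuine inverse. It is the optimality inclusion that supplies the existence of an admissible $s$ in the first place, after which left-multiplication by $C$ pins it down uniquely as $Cw$. A secondary point to handle carefully is the convex chain rule $\partial(\norm{ D \,\cdot\, }_1)(x) = D\tr\bigl\{ s : s_n \in \sign([Dx]_n) \bigr\}$, which is valid because $\norm{\,\cdot\,}_1$ is finite-valued everywhere, together with the index range in \eqref{eq:opt}, which runs over the $N-1$ rows of $D$.
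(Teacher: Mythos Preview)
Your proposal is correct and follows essentially the same route as the paper's own proof: apply Fermat's rule to $F_\alpha$, split into the smooth part (whose gradient is computed via Proposition~\ref{prop:Sgrad}) and the nonsmooth part $\lam\norm{D\,\cdot\,}_1$ (whose subdifferential is $\lam D\tr\{\,s:s_n\in\sign([Dx]_n)\,\}$), and then left-multiply the resulting relation $D\tr s=-w$ by $C$ using $CD\tr=-I_{N-1}$. Your additional remarks on why the chain rule applies and why $C$ acts as a left inverse are sound elaborations of steps the paper states more tersely.
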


According to \eqref{eq:opt},
if $ x \in \RR^N $ is a minimizer, then
the points $ ( [Dx]_n, u_n) \in \RR^2 $ must lie on the graph of the signum function,
where $ u_n $ denotes the value on the left-hand side of \eqref{eq:opt}.
Hence, the optimality condition can be depicted as a scatter plot. 
Figures in the supplemental material show how the points in the scatter plot converge
to the signum function as the algorithm \eqref{eq:alg} progresses.

\begin{proof}[Proof of Proposition \ref{prop:opt}]
A vector $ x $ minimizes a convex function $ F $ if $ 0 \in \partial F( x ) $
where $ \partial F(x) $ is the subdifferential of $ F $ at $ x $.
The subdifferential of the cost function \eqref{eq:defF} is given by
\begin{equation}
	\partial F_\alpha( x ) = x - y - \lam \nabla S_\alpha(x) + \partial (  \lam \norm{ D \, \cdot \, }_1 )(x)
\end{equation}
which can be written as
\ifTwoColumn
	\begin{multline}
		\partial F_\alpha( x ) = \{ x - y - \lam \nabla S_\alpha(x) +  \lam D\tr \!  u
		\\
		{}  :  u_n \in \sign( [ D x ]_n ) , \, u \in \RR^{N-1} \}.
	\end{multline}
\else
	\begin{equation}
		\partial F_\alpha( x ) = \{ x - y - \lam \nabla S_\alpha(x) +  \lam D\tr \!  u  :  u_n \in \sign( [ D x ]_n ) , \, u \in \RR^{N-1} \}.
	\end{equation}
\fi
Hence, the condition $ 0 \in \partial F_\alpha( x ) $ can be written as
\ifTwoColumn
	\begin{multline}
		(y - x)/\lam + \nabla S_\alpha(x)
		\\
		{} \in
		\{ D\tr \! u :  u_n \in \sign( [ D x]_n ) , \, u \in \RR^{N-1}  \}.
	\end{multline}
\else
	\begin{equation}
		(y - x)/\lam + \nabla S_\alpha(x)
		\in
		\{ D\tr \! u :  u_n \in \sign( [ D x]_n ) , \, u \in \RR^{N-1}  \}.
	\end{equation}
\fi

Let $ C $ be a matrix of size $ (N-1) \times N $ such that  $ C D\tr = -I $, 
e.g., \eqref{eq:defC}.
It follows that the condition $ 0 \in \partial F_\alpha( x ) $ implies that
\begin{equation}
	 \bigl[ C \bigl( (x - y)/\lam - \nabla S_\alpha(x) \bigr) \bigr]_n 
	\in
	\sign( [ D x]_n )
\end{equation}
for $ n = 0, \dots, N-1 $.
Using Proposition \ref{prop:Sgrad} gives \eqref{eq:opt}.
\end{proof}

\section{Example}

\begin{figure}[t]
	\centering
	\includegraphics{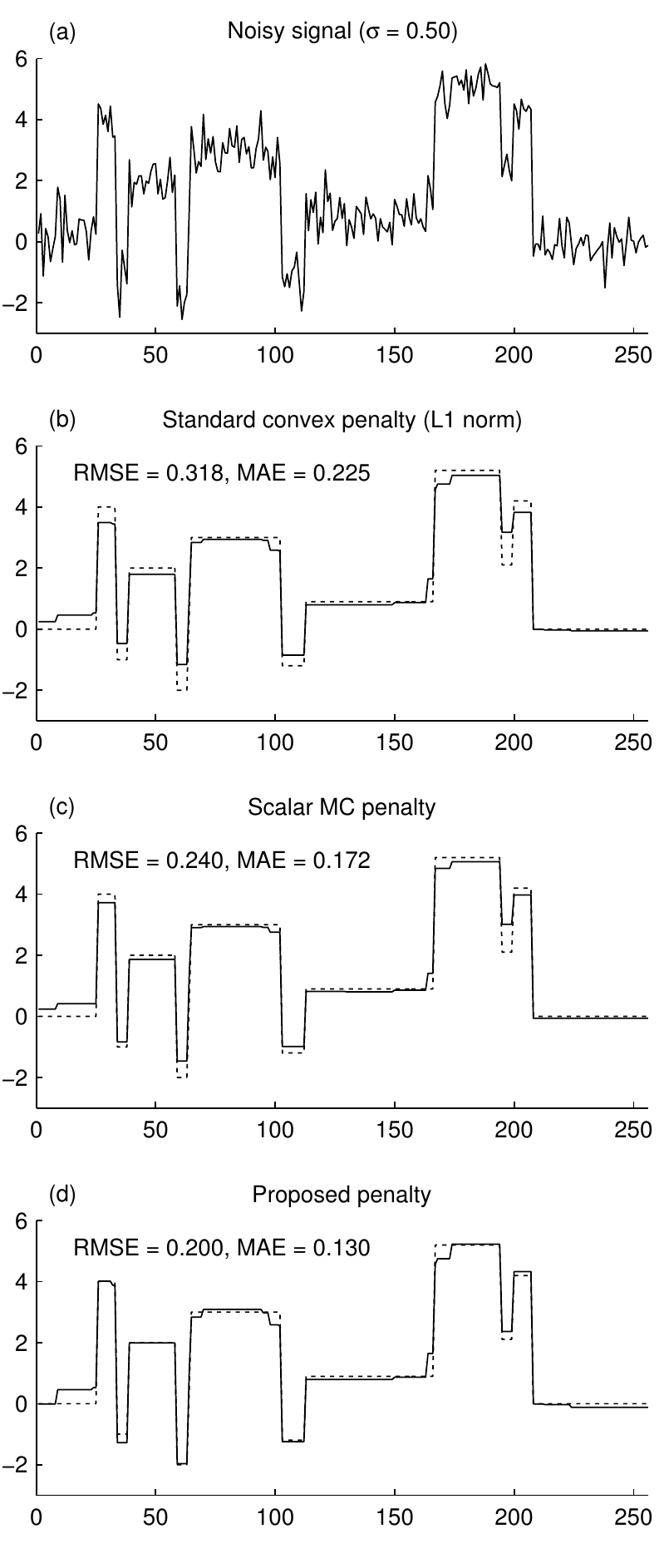}
	\caption{
		Total variation denoising using three different penalties. 
		(The dashed line is the true noise-free signal.)
	}
	\label{fig:example1}
\end{figure}

This example applies TV denoising to the noisy piecewise constant signal 
shown in Fig.~\ref{fig:example1}(a).
This is the `blocks'  signal (length $ N = 256 $) 
generated by the Wavelab  \cite{wavelab} function \texttt{MakeSignal}
with additive white Gaussian noise ($ \sigma = 0.5 $).
We set the regularization parameter to $ \lam = \sqrt{N} \sigma /4 $
following a discussion in Ref.~\cite{Dumbgen_2009}.
For Moreau-enhanced TV denoising,
we set the non-convexity parameter to $ \alpha = 0.7 / \lam $.

Figure~\ref{fig:example1} shows the result of TV denoising with three different penalties.
In each case, a \emph{convex} cost function is minimized. 
Figure~\ref{fig:example1}(b)
shows the result using standard TV denoising (i.e., using the $ \ell_1 $-norm). 
This denoised signal consistently underestimates the amplitudes 
of jump discontinuities, especially those occurring near other jump discontinuities of opposite sign. 
Figure~\ref{fig:example1}(c)
shows the result using a separable non-convex penalty \cite{Selesnick_SPL_2015}.
This method can use any non-convex scalar penalty satisfying a prescribed set of properties. 
Here we use the minimax-concave (MC) penalty~\cite{Zhang_2010_AnnalsStat, Bayram_2015_SPL}
with non-convexity parameter set to maintain cost function convexity. %  ($ a = 1/(4 \lam) $). 
This result significantly improves the root-mean-square error (RMSE)
and mean-absolute-deviation (MAE), 
but still underestimates the amplitudes of jump discontinuities.

Moreau-enhanced TV denoising, shown in Fig.~\ref{fig:example1}(d),
further reduces the RMSE and MAE and more accurately estimates
the amplitudes of jump discontinuities. 
% THIS SENTENCE CAN BE OMITTED TO SAVE ROOM:
%For example, observe the discontinuities around indices 195 and 200 for the three methods.
The proposed non-separable non-convex penalty 
avoids the consistent underestimation of discontinuities
seen in Figs.~\ref{fig:example1}(b) and \ref{fig:example1}(c).

To further compare the denoising capability of the considered penalties, 
we calculate the average RMSE as a function of the noise level. 
We let the noise standard deviation span the interval $ 0.2 \le \sigma \le 1.0 $.
For each $ \sigma $ value, we calculate the average RMSE of 100 noise realizations.
Figure~\ref{fig:rmse} shows that the proposed penalty yields
the lowest average RMSE for all $ \sigma \ge 0.4 $. 
However, at low noise levels, separable convexity-preserving penalties \cite{Selesnick_SPL_2015}
perform better than the proposed non-separable convexity-preserving penalty.

\begin{figure}[t]
	\centering
	\includegraphics{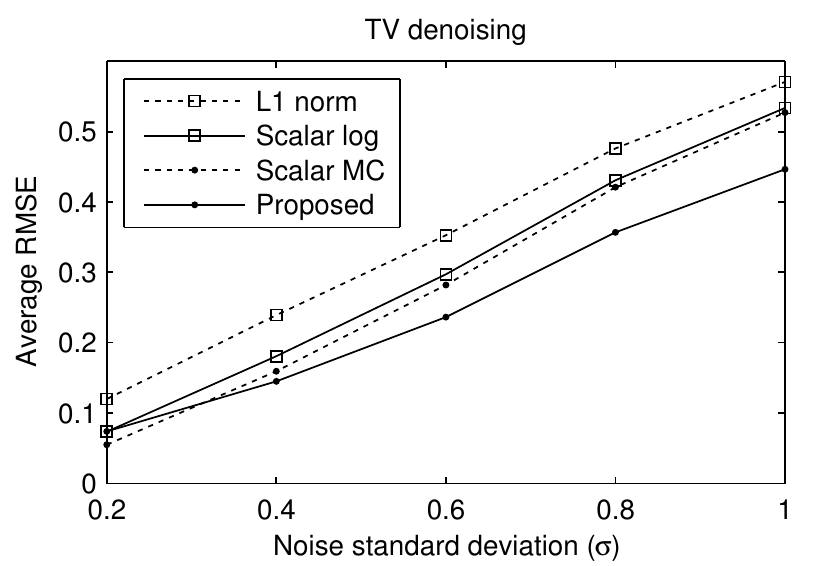}
	\caption{
		TV denoising using four penalties:
		RMSE as a function of noise level.
	}
	\label{fig:rmse}
\end{figure}

\section{Conclusion}

This paper demonstrates the use of the Moreau envelope to 
define a non-separable non-convex TV denoising penalty 
that maintains the convexity of the TV denoising cost function. 
The basic idea is to subtract from a convex penalty its Moreau envelope.
This idea should also be useful for other problems, 
e.g., analysis tight-frame denoising \cite{Parekh_2015_SPL}.

Separable convexity-preserving penalties \cite{Selesnick_SPL_2015}
outperformed the proposed one at low noise levels in the example.
It is yet to be determined if a more general class of convexity-preserving penalties can
outperform both across all noise levels. 

\bibliographystyle{plain}

\begin{thebibliography}{10}

\bibitem{Astrom_2015_cnf}
F.~Astrom and C.~Schnorr.
\newblock On coupled regularization for non-convex variational image
  enhancement.
\newblock In {\em IAPR Asian Conf. on Pattern Recognition (ACPR)}, pages
  786--790, November 2015.

\bibitem{Bauschke_2011}
H.~H. Bauschke and P.~L. Combettes.
\newblock {\em Convex Analysis and Monotone Operator Theory in Hilbert Spaces}.
\newblock Springer, 2011.

\bibitem{Bayram_2015_SPL}
\.{I}. Bayram.
\newblock Penalty functions derived from monotone mappings.
\newblock {\em IEEE Signal Processing Letters}, 22(3):265--269, March 2015.

\bibitem{Bayram_2016_TSP}
I.~Bayram.
\newblock On the convergence of the iterative shrinkage/thresholding algorithm
  with a weakly convex penalty.
\newblock {\em IEEE Trans.\ Signal Process.}, 64(6):1597--1608, March 2016.

\bibitem{Becker_2014_JNCA_nomonth}
S.~Becker and P.~L. Combettes.
\newblock An algorithm for splitting parallel sums of linearly composed
  monotone operators, with applications to signal recovery.
\newblock {\em J. Nonlinear and Convex Analysis}, 15(1):137--159, 2014.

\bibitem{Blake_1987}
A.~Blake and A.~Zisserman.
\newblock {\em Visual Reconstruction}.
\newblock MIT Press, 1987.

\bibitem{Burger_2016}
M.~Burger, K.~Papafitsoros, E.~Papoutsellis, and C.-B. Sch{\"o}nlieb.
\newblock Infimal convolution regularisation functionals of {BV} and {L}p
  spaces.
\newblock {\em J. Math. Imaging and Vision}, 55(3):343--369, 2016.

\bibitem{Candes_2008_JFAP}
E.~J. Cand\`es, M.~B. Wakin, and S.~Boyd.
\newblock Enhancing sparsity by reweighted l1 minimization.
\newblock {\em J. {F}ourier Anal. Appl.}, 14(5):877--905, December 2008.

\bibitem{Carlsson_2016_arxiv}
M.~Carlsson.
\newblock On convexification/optimization of functionals including an l2-misfit
  term.
\newblock https://arxiv.org/abs/1609.09378, September 2016.

\bibitem{Castella_2015_camsap_noabr}
M.~Castella and J.-C. Pesquet.
\newblock Optimization of a {G}eman-{M}c{C}lure like criterion for sparse
  signal deconvolution.
\newblock In {\em IEEE Int. Workshop Comput. Adv. Multi-Sensor Adaptive Proc.},
  pages 309--312, December 2015.

\bibitem{Chambolle_1997_NumerMath}
A.~Chambolle and P.-L. Lions.
\newblock Image recovery via total variation minimization and related problems.
\newblock {\em Numerische Mathematik}, 76:167--188, 1997.

\bibitem{Chartrand_2014_ICASSP}
R.~Chartrand.
\newblock Shrinkage mappings and their induced penalty functions.
\newblock In {\em Proc.~IEEE Int.~Conf. Acoust., Speech, Signal Processing
  (ICASSP)}, pages 1026--1029, May 2014.

\bibitem{Chen_2014_TSP_Convergence}
L.~Chen and Y.~Gu.
\newblock The convergence guarantees of a non-convex approach for sparse
  recovery.
\newblock {\em IEEE Trans.\ Signal Process.}, 62(15):3754--3767, August 2014.

\bibitem{Chen_2014_TSP_ncogs}
P.-Y. Chen and I.~W. Selesnick.
\newblock Group-sparse signal denoising: Non-convex regularization, convex
  optimization.
\newblock {\em IEEE Trans.\ Signal Process.}, 62(13):3464--3478, July 2014.

\bibitem{Chouzenoux_2013_SIAM}
E.~Chouzenoux, A.~Jezierska, J.~Pesquet, and H.~Talbot.
\newblock A majorize-minimize subspace approach for $\ell_2-\ell_0$ image
  regularization.
\newblock {\em SIAM J. Imag. Sci.}, 6(1):563--591, 2013.

\bibitem{Combettes_2011_chap}
P.~L. Combettes and J.-C. Pesquet.
\newblock Proximal splitting methods in signal processing.
\newblock In H.~H. Bauschke et~al., editors, {\em Fixed-Point Algorithms for
  Inverse Problems in Science and Engineering}, pages 185--212.
  Springer-Verlag, 2011.

\bibitem{Condat_2013}
L.~Condat.
\newblock A direct algorithm for {1-D} total variation denoising.
\newblock {\em IEEE Signal Processing Letters}, 20(11):1054--1057, November
  2013.

\bibitem{Darbon_2006_JMIV_part1}
J.~Darbon and M.~Sigelle.
\newblock Image restoration with discrete constrained total variation {P}art
  {I}: {F}ast and exact optimization.
\newblock {\em J. Math. Imaging and Vision}, 26(3):261--276, 2006.

\bibitem{Daubechies_2004}
I.~Daubechies, M.~Defrise, and C.~De Mol.
\newblock An iterative thresholding algorithm for linear inverse problems with
  a sparsity constraint.
\newblock {\em Commun. Pure Appl. Math}, 57(11):1413--1457, 2004.

\bibitem{Ding_2015_SPL}
Y.~Ding and I.~W. Selesnick.
\newblock Artifact-free wavelet denoising: Non-convex sparse regularization,
  convex optimization.
\newblock {\em IEEE Signal Processing Letters}, 22(9):1364--1368, September
  2015.

\bibitem{wavelab}
D.~Donoho, A.~Maleki, and M.~Shahram.
\newblock Wavelab 850, 2005.
\newblock \texttt{http://www-stat.stanford.edu/\%7Ewavelab/}.

\bibitem{Dumbgen_2009}
L.~D\"umbgen and A.~Kovac.
\newblock Extensions of smoothing via taut strings.
\newblock {\em Electron. J. Statist.}, 3:41--75, 2009.

\bibitem{DurandFroment_2003_SIAM}
S.~Durand and J.~Froment.
\newblock Reconstruction of wavelet coefficients using total variation
  minimization.
\newblock {\em SIAM J. Sci. Comput.}, 24(5):1754--1767, 2003.

\bibitem{Easley_2009_shearlet_tv}
G.~R. Easley, D.~Labate, and F.~Colonna.
\newblock Shearlet-based total variation diffusion for denoising.
\newblock {\em IEEE Trans.\ Image Process.}, 18(2):260--268, February 2009.

\bibitem{Fig_2003_TIP}
M.~Figueiredo and R.~Nowak.
\newblock An {EM} algorithm for wavelet-based image restoration.
\newblock {\em IEEE Trans.\ Image Process.}, 12(8):906--916, August 2003.

\bibitem{Gholami_2013_SP}
A.~Gholami and S.~M. Hosseini.
\newblock A balanced combination of {T}ikhonov and total variation
  regularizations for reconstruction of piecewise-smooth signals.
\newblock {\em Signal Processing}, 93(7):1945--1960, 2013.

\bibitem{Hastie_2015_CRC_book}
T.~Hastie, R.~Tibshirani, and M.~Wainwright.
\newblock {\em Statistical learning with sparsity: the lasso and
  generalizations}.
\newblock CRC Press, 2015.

\bibitem{He_2016_MSSP}
W.~He, Y.~Ding, Y.~Zi, and I.~W. Selesnick.
\newblock Sparsity-based algorithm for detecting faults in rotating machines.
\newblock {\em Mechanical Systems and Signal Processing}, 72-73:46--64, May
  2016.

\bibitem{Johnson_2013_JCGS}
N.~A. Johnson.
\newblock A dynamic programming algorithm for the fused lasso and
  ${L}_0$-segmentation.
\newblock {\em J. Computat. Graph. Stat.}, 22(2):246--260, 2013.

\bibitem{Lanza_2016_JMIV}
A.~Lanza, S.~Morigi, and F.~Sgallari.
\newblock Convex image denoising via non-convex regularization with parameter
  selection.
\newblock {\em J. Math. Imaging and Vision}, pages 1--26, 2016.

\bibitem{Little_2011_RSoc_Part1}
M.~A. Little and N.~S. Jones.
\newblock Generalized methods and solvers for noise removal from piecewise
  constant signals: Part {I} -- background theory.
\newblock {\em Proc. R. Soc. A}, 467:3088--3114, 2011.

\bibitem{MalekMohammadi_2016_TSP}
M.~Malek-Mohammadi, C.~R. Rojas, and B.~Wahlberg.
\newblock A class of nonconvex penalties preserving overall convexity in
  optimization-based mean filtering.
\newblock {\em IEEE Trans.\ Signal Process.}, 64(24):6650--6664, December 2016.

\bibitem{Marnissi_2013_ICIP}
Y.~Marnissi, A.~Benazza-Benyahia, E.~Chouzenoux, and J.-C. Pesquet.
\newblock Generalized multivariate exponential power prior for wavelet-based
  multichannel image restoration.
\newblock In {\em Proc.~IEEE Int.~Conf.~Image Processing (ICIP)}, pages
  2402--2406, September 2013.

\bibitem{Mohimani_2009_TSP}
H.~Mohimani, M.~Babaie-Zadeh, and C.~Jutten.
\newblock A fast approach for overcomplete sparse decomposition based on
  smoothed l0 norm.
\newblock {\em IEEE Trans.\ Signal Process.}, 57(1):289--301, January 2009.

\bibitem{Mollenhoff_2015_SIAM}
T.~M\"ollenhoff, E.~Strekalovskiy, M.~Moeller, and D.~Cremers.
\newblock The primal-dual hybrid gradient method for semiconvex splittings.
\newblock {\em SIAM J. Imag. Sci.}, 8(2):827--857, 2015.

\bibitem{Nikolova_1998_ICIP}
M.~Nikolova.
\newblock Estimation of binary images by minimizing convex criteria.
\newblock In {\em Proc.~IEEE Int.~Conf.~Image Processing (ICIP)}, pages
  108--112 vol. 2, 1998.

\bibitem{Nikolova_2000_SIAM}
M.~Nikolova.
\newblock Local strong homogeneity of a regularized estimator.
\newblock {\em SIAM J. Appl. Math.}, 61(2):633--658, 2000.

\bibitem{Nikolova_2005_MMS}
M.~Nikolova.
\newblock Analysis of the recovery of edges in images and signals by minimizing
  nonconvex regularized least-squares.
\newblock {\em Multiscale Model. Simul.}, 4(3):960--991, 2005.

\bibitem{Nikolova_2011_chap}
M.~Nikolova.
\newblock Energy minimization methods.
\newblock In O.~Scherzer, editor, {\em Handbook of Mathematical Methods in
  Imaging}, chapter~5, pages 138--186. Springer, 2011.

\bibitem{Nikolova_2010_TIP}
M.~Nikolova, M.~K. Ng, and C.-P. Tam.
\newblock Fast nonconvex nonsmooth minimization methods for image restoration
  and reconstruction.
\newblock {\em IEEE Trans.\ Image Process.}, 19(12):3073--3088, December 2010.

\bibitem{Parekh_2015_SPL}
A.~Parekh and I.~W. Selesnick.
\newblock Convex denoising using non-convex tight frame regularization.
\newblock {\em IEEE Signal Processing Letters}, 22(10):1786--1790, October
  2015.

\bibitem{Parekh_2016_SPL_ELMA}
A.~Parekh and I.~W. Selesnick.
\newblock Enhanced low-rank matrix approximation.
\newblock {\em IEEE Signal Processing Letters}, 23(4):493--497, April 2016.

\bibitem{Portilla_2007_SPIE}
J.~Portilla and L.~Mancera.
\newblock {L0}-based sparse approximation: two alternative methods and some
  applications.
\newblock In {\em Proceedings of SPIE}, volume 6701 (Wavelets XII), San Diego,
  CA, USA, 2007.

\bibitem{Rodriguez_2009_TIP}
P.~Rodriguez and B.~Wohlberg.
\newblock Efficient minimization method for a generalized total variation
  functional.
\newblock {\em IEEE Trans.\ Image Process.}, 18(2):322--332, February 2009.

\bibitem{ROF_1992}
L.~Rudin, S.~Osher, and E.~Fatemi.
\newblock Nonlinear total variation based noise removal algorithms.
\newblock {\em Physica D}, 60:259--268, 1992.

\bibitem{Selesnick_2014_TSP_MSC}
I.~W. Selesnick and I.~Bayram.
\newblock Sparse signal estimation by maximally sparse convex optimization.
\newblock {\em IEEE Trans.\ Signal Process.}, 62(5):1078--1092, March 2014.

\bibitem{Selesnick_2016_TSP_BISR}
I.~W. Selesnick and I.~Bayram.
\newblock Enhanced sparsity by non-separable regularization.
\newblock {\em IEEE Trans.\ Signal Process.}, 64(9):2298--2313, May 2016.

\bibitem{Selesnick_SPL_2015}
I.~W. Selesnick, A.~Parekh, and I.~Bayram.
\newblock Convex {1-D} total variation denoising with non-convex
  regularization.
\newblock {\em IEEE Signal Processing Letters}, 22(2):141--144, February 2015.

\bibitem{Setzer_2011_CMS}
S.~Setzer, G.~Steidl, and T.~Teuber.
\newblock Infimal convolution regularizations with discrete l1-type
  functionals.
\newblock {\em Commun. Math. Sci.}, 9(3):797--827, 2011.

\bibitem{Storath_2014_TSP}
M.~Storath, A.~Weinmann, and L.~Demaret.
\newblock Jump-sparse and sparse recovery using {P}otts functionals.
\newblock {\em IEEE Trans.\ Signal Process.}, 62(14):3654--3666, July 2014.

\bibitem{Wipf_2011_tinfo}
D.~P. Wipf, B.~D. Rao, and S.~Nagarajan.
\newblock Latent variable {B}ayesian models for promoting sparsity.
\newblock {\em IEEE Trans.~Inform. Theory}, 57(9):6236--6255, September 2011.

\bibitem{Zhang_2010_AnnalsStat}
C.-H. Zhang.
\newblock Nearly unbiased variable selection under minimax concave penalty.
\newblock {\em The Annals of Statistics}, pages 894--942, 2010.

\bibitem{Zou_2008_AS}
H.~Zou and R.~Li.
\newblock One-step sparse estimates in nonconcave penalized likelihood models.
\newblock {\em Ann. Statist.}, 36(4):1509--1533, 2008.

\end{thebibliography}

\clearpage

\section{Supplemental Figures}

To gain intuition about the proposed penalty function and 
how it induces sparsity of $ D x $ while maintaining
convexity of the cost function,
a few illustrations are useful.

Figure~\ref{fig:pen1} illustrates the proposed penalty,
its sparsity-inducing behavior, and its relationship to the differentiable convex function $ S_\alpha $.
Figure~\ref{fig:pen2} illustrates how the proposed penalty 
is able to maintain the convexity of the cost function.

Figure~\ref{fig:pen1}
shows the proposed penalty $ \psi_\alpha $ defined in \eqref{eq:defpsi}
for $ \alpha = 1 $ and $ N = 2 $.
It can be seen that the penalty approximates 
the standard TV penalty $ \norm{ D \, \cdot \,}_1 $ for signals $ x $ 
for which $ D x $ is approximately zero.
But it increases more slowly than the standard TV penalty as $ \norm{ D x } \to \infty $.
In that sense, it penalizes large values less than the standard TV penalty.

As shown in Fig.~\ref{fig:pen1}, the proposed penalty is expressed as the standard TV penalty 
minus the differentiable convex non-negative function $ S_\alpha $.
Since $ S_\alpha $ is flat around the null space of $ D $,
the penalty $ \psi_\alpha $ approximates the standard TV penalty
around the null space of $ D $.
In addition, since $ S_\alpha $ is non-negative, 
the penalty $ \psi_\alpha $ lies below the standard TV penalty.

Figure~\ref{fig:pen2}
shows the differentiable part of the cost function $ F_\alpha $
for $ \alpha = 1 $, $ \lam = 1 $, and $ N = 2 $.
The differentiable part is given by $ f_1 $ in  \eqref{eq:deff1}.
The total cost function is obtained by adding the standard TV penalty to $ f_1 $, 
see \eqref{eq:FBSd}.
Hence, $ F_\alpha $ is convex if the differentiable part $ f_1 $ is convex.
As can be seen in Fig.~\ref{fig:pen2}, the function $ f_1 $ is convex. 
We note that the function $ f_1 $ in this figure is not strongly convex.
This is because we have used $ \alpha = 1/\lam $.
If $ \alpha < 1/\lam $, then the function $ f_1 $ will be strongly convex
(and hence $ F_\alpha $ will also be strongly convex and have a unique minimizer).
We recommend $ \alpha < 1/\lam $.

Figure~\ref{fig:pen3}
shows the differentiable part $ f_1 $ of the cost function $ F_\alpha $
for $ \alpha = 2 $, $ \lam = 1 $, and $ N = 2 $.
Here, the function $ f_1 $ is non-convex because $ \alpha > 1/\lam $
which violates the convexity condition. 

In order to simplify the illustration, we have set $ y = 0 $ in Fig.~\ref{fig:pen2}.
In practice $ y \neq 0 $.
But the only difference between the cases $ y = 0 $ and $ y \neq 0 $
is an additive affine function
which does not alter the convexity properties of the function. 

In practice we are interested in the case $ N \gg 2 $,
i.e., signals much longer than two samples.
However, in order to illustrate the functions,
we are limited to the case of $ N = 2 $.
We note that the case of $ N = 2 $ does not fully illustrate the
behavior of the proposed penalty.
In particular, when $ N = 2 $
the penalty is simply a linear transformation of a scalar function,
which does not convey the non-separable behavior
of the penalty for $ N > 2 $.

A separate document has additional supplemental figures 
illustrating the convergence of the iterative algorithm \eqref{eq:alg}.
These figures show the optimality condition as a scatter plot.
The points in the scatter plot converge
to the signum function as the algorithm converges.

\begin{figure}[t]
	\centering
	\includegraphics{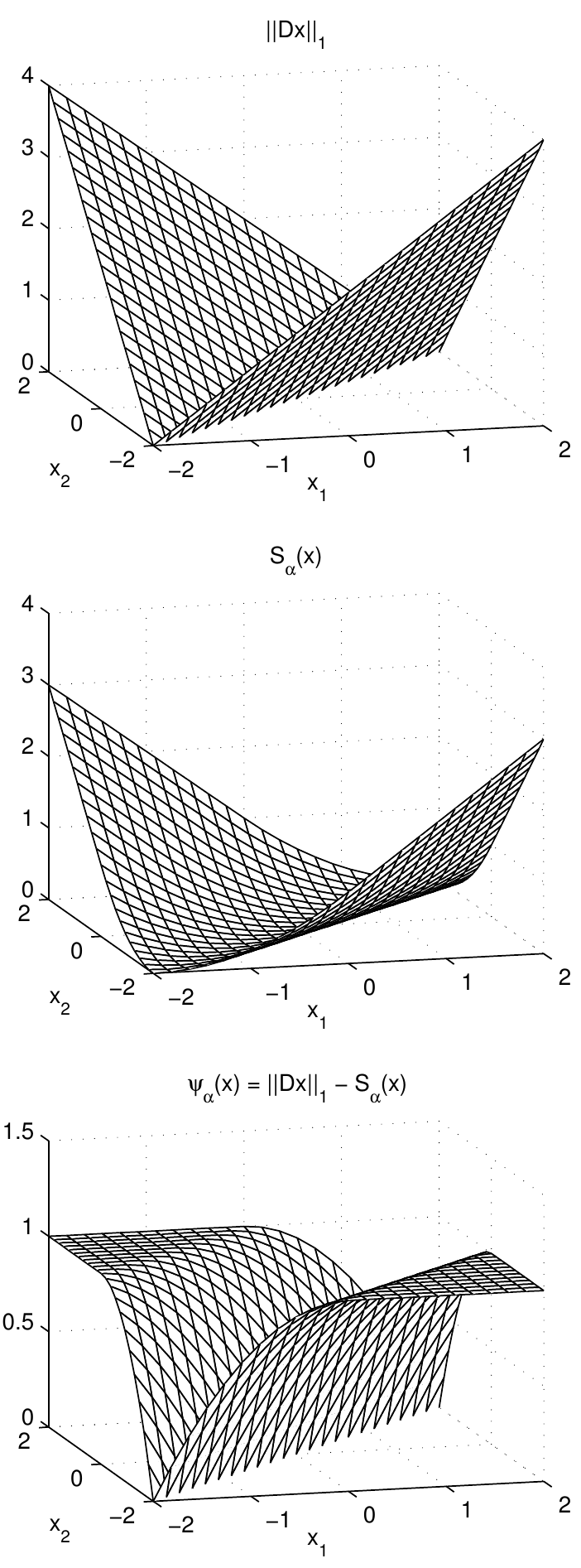}
	\caption{
		Penalty $ \psi_\alpha $ with $ \alpha = 1 $ for $ N = 2 $.
	}
	\label{fig:pen1}
\end{figure}

\begin{figure}[t]
	\centering
	\includegraphics{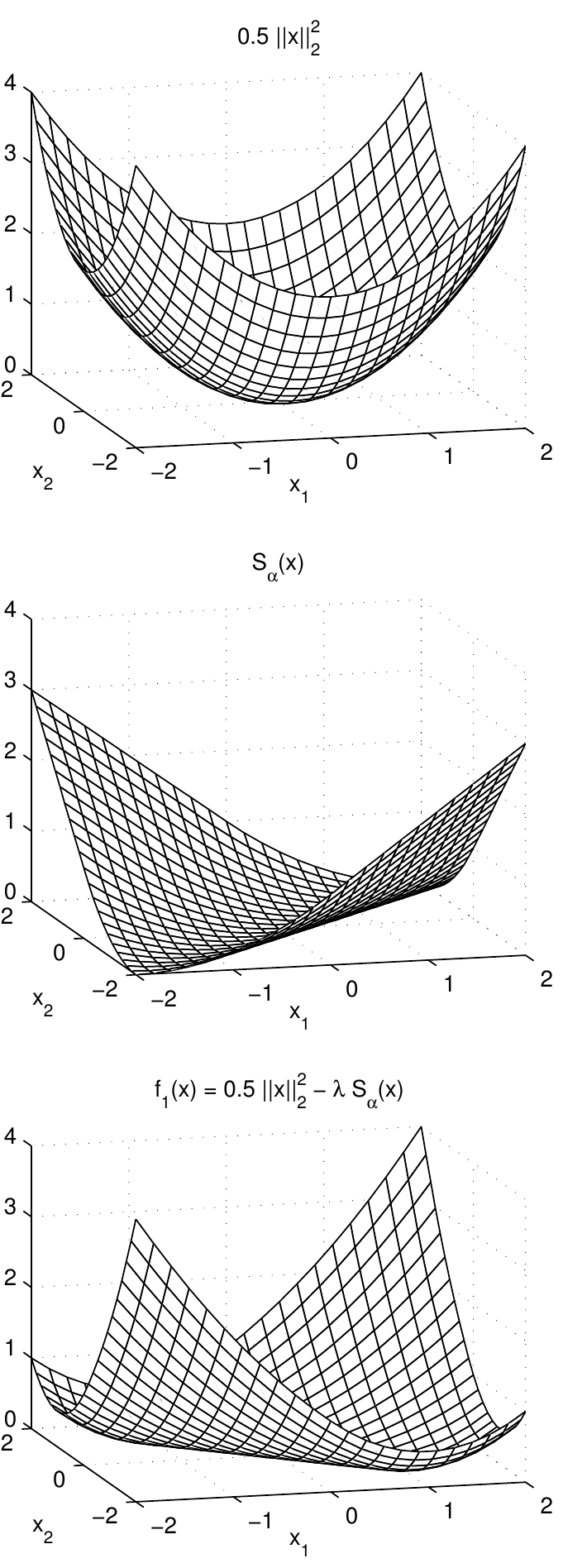}
	\caption{
		Differentiable part $ f_1 $ of cost function with $ \lam = 1 $, $ \alpha = 1 $ for $ N = 2 $.
		The function is convex as $ \alpha \le 1/\lam $.
	}
	\label{fig:pen2}
\end{figure}

\begin{figure}[t]
	\centering
	\includegraphics{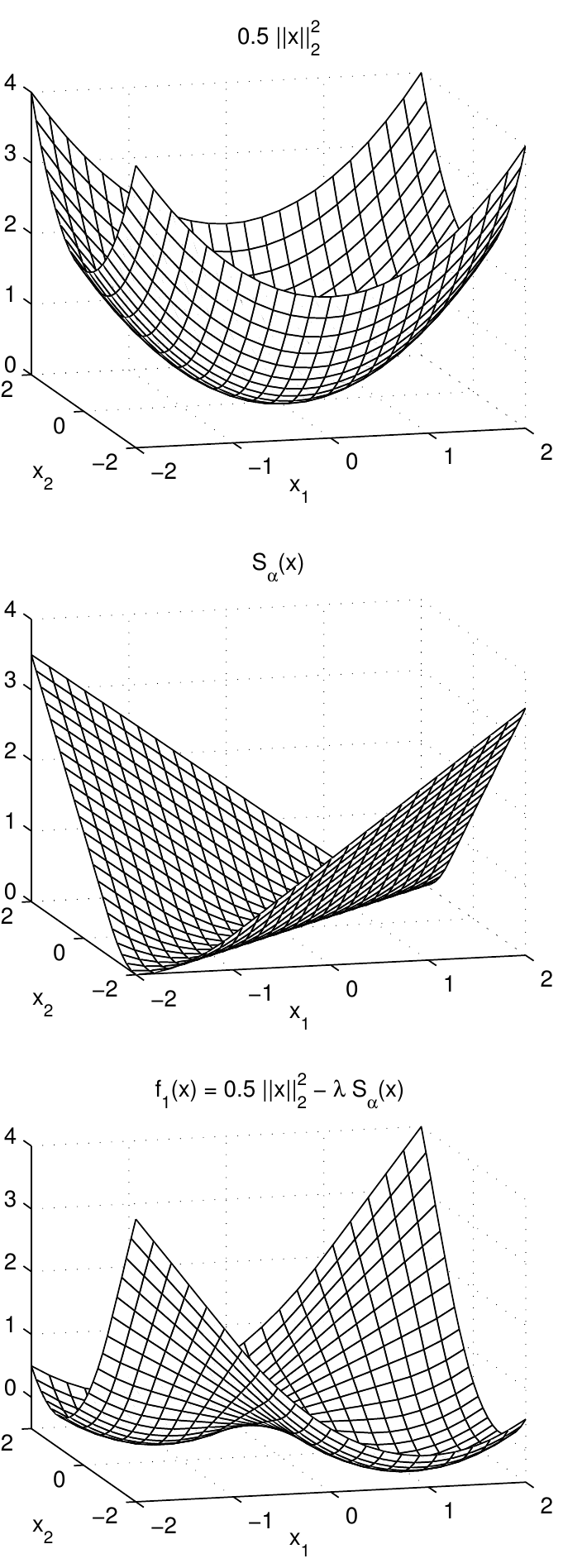}
	\caption{
		Differentiable part $ f_1 $ of cost function with $ \lam = 1 $, $ \alpha = 2 $ for $ N = 2 $.
		The function is non-convex as $ \alpha > 1/\lam $.
	}
	\label{fig:pen3}
\end{figure}

\end{document}

\clearpage

\begin{figure}
	\centering
	\includegraphics{scatter}
	\caption{
		Optimality scatter plot.
	}
	\label{fig:scatter}
\end{figure}

\end{document}